\newtheorem{thm}{Theorem}
\newtheorem{pro}[thm]{Proposition}
\newtheorem{lem}[thm]{Lemma}
\theoremstyle{remark}
\newtheorem{rmk}[thm]{Remark}
\def\J{\mathscr{J}}
\def\D{\mathscr{D}}
\def\R{\mathscr{R}}
\def\L{\mathscr{L}}
\def\H{\mathscr{H}}
\def\blue#1{\mathbf{b}({#1})}
\def\red#1{\mathbf{r}({#1})}
\def\cc#1{{\mathbf{c}({#1})}}
\newcommand{\fillbox}[2]{\draw[fill=gray!30](#1,#2)--(#1+1,#2)--(#1+1,#2+1)--(#1,#2+1)--(#1,#2);}
\newcommand{\rank}{\operatorname{rank}}
\newcommand{\idrank}{\operatorname{idrank}}
\begin{document}

\begin{flushleft}

{\Large{\color{darkblue}{\bf \emph{Igor Dolinka\footnote{Department of Mathematics and Informatics, University of Novi Sad, Trg Dositeja Obradovi\'ca 4, 21101 Novi Sad, Serbia, {\tt dockie\,@\,dmi.uns.ac.rs}.} \& James East\footnote{Centre for Research in Mathematics; School of Computing, Engineering and Mathematics, Western Sydney University, Locked Bag 1797, Penrith NSW 2751, Australia, {\tt J.East\,@\,WesternSydney.edu.au}.}}}\vspace{2mm}\hrule}}

\end{flushleft}

\vspace{2mm}

\begin{flushright}

{\Large {\bf \emph{\color{darkred}{The idempotent generated subsemigroup \\ of the Kauffman monoid}}}}

\end{flushright}

\bigskip\bigskip\bigskip


\begin{quote}
{\bf Abstract.}  
We characterise the elements of the (maximum) idempotent generated subsemigroup of the Kauffman monoid in terms of combinatorial data associated to certain normal forms.  We also calculate the smallest size of a generating set and idempotent generating set.

{\it Keywords}: Kauffman monoid, idempotents, rank, idempotent rank.

MSC: 20M20; 20M05, 05E15.
\end{quote}

\section{Introduction}

Let $n\geq 2$ and $c\in\mathbb{C}$ (more generally, instead of complex numbers $\mathbb{C}$ one can take an arbitrary
commutative ring $R$). The \emph{Temperley-Lieb algebra} $TL_n(c)$, introduced in \cite{TL},  is the unitary associative algebra given by the 
presentation consisting of generators $h_1,\dots,h_{n-1}$ and defining relations
\begin{align*}
h_ih_j &= h_jh_i && \text{whenever }|i-j|\geq 2,\\
h_ih_jh_i &= h_i && \text{whenever }|i-j|=1,\\
h_i^2 = ch_i&=h_ic &&\text{for all }1\leq i<n.
\end{align*}
When $c=1$, we obtain a special case, the so-called \emph{Jones algebra} \cite{Jo}, and its basis forms
a monoid called the \emph{Jones monoid} $J_n$ \cite{EG,LFG}. Elements of the Jones monoid form the basis of general
Temperley-Lieb algebras as well, with the exception that within $TL_n(c)$ they need not form a monoid anymore, 
as witnessed by the third relation above. (Indeed, the Temperley-Lieb algebra is the \emph{twisted semigroup algebra}
of the Jones monoid; see \cite{Wi}.) However, it is possible to `extract' a monoid from the Temperley-Lieb algebra by considering
the above presentation as a \emph{monoid} presentation -- which is indeed possible, as it contains no mention of 
the addition operation -- including $c$ as a separate monoid generator. (Henceforth, generation is always within the variety of monoids unless otherwise specified.) In this way, we obtain the \emph{Kauffman 
monoid} $K_n$, which (upon interpretation of the symbol $c$ as a scalar multiple of $1$) spans $TL_n(c)$
but is not a basis (e.g. due to $c$ and $1$ not being independent).

The name was coined in the paper 
\cite{BDP} in honour of Louis H. Kauffman who was the first to realise the connection between planar Brauer diagrams
and the Temperley-Lieb algebra \cite{Ka}, although the first full, self-contained proof of isomorphism between $K_n$ and 
the monoid consisting of pairs $(c^k,\alpha)$ where $k\geq 0$ is an integer and $\alpha$ is a planar Brauer diagram
is given in \cite{BDP}. The operation in the latter monoid -- naturally, also called the Kauffman monoid -- is defined 
by $(c^k,\alpha)(c^\ell,\beta)=(c^{k+\ell+\tau(\alpha,\beta)},\alpha\beta)$, where $\tau(\alpha,\beta)$ is the number
of inner circles formed in the course of computing the product $\alpha\beta$ in the Brauer monoid by stacking
$\alpha$ on top of $\beta$. For $c=1$ we get that the Jones monoid is isomorphic just to the planar submonoid of the 
Brauer monoid. In such a diagrammatic representation $c$ is just the pair $(c,1)$, while $h_i$ is interpreted as
$(c^0,\delta_i)$, where $\delta_i$ is the \emph{hook} (or \emph{diapsis}): its connected components are $\{i,i+1\}$,
$\{i',(i+1)'\}$ and $\{j,j'\}$ for all $j\not\in\{i,i+1\}$.  Any equation in the current paper may be verified using these diagrams, but we find the approach via words and presentations to be more convenient.  At only one point (in the proof of Lemma~\ref{lem:J}) will we rely on a (very simple) diagrammatic calculation.  For more on diagrams, see for example \cite{BDP,LFG,DEtwisted}.

Beyond the above-mentioned article \cite{BDP}, a number of previous studies of the Kauffman monoid have been carried out.  
Gr\"obner-Shirshov bases are discussed in \cite{BL}.  Green's relations and the ideal structure of $K_n$ (and associated quotients) are described in \cite{LFG}.  In \cite{ACHLV}, it is shown that $K_n$, with $n\geq3$, has no finite basis for its identities (considered either as a semigroup or as an involution semigroup).  The idempotents of $K_n$ (and other planar diagram monoids) are classified and enumerated in \cite{DEEFHHL2}.
In the current work, we describe the idempotent generated subsemigroup of $K_n$ (Theorem \ref{main1}).  We also calculate the rank (smallest size of a generating set) and idempotent rank (smallest size of an idempotent generating set) of this subsemigroup (Theorem \ref{main2}).  We note that these tasks have been carried out for a number of related diagram monoids, such as the (twisted) Brauer, Jones, Motzkin and partition monoids; see for example \cite{DEtwisted,DEG,Epnsn,EG,MM}.  The original studies of idempotent generated subsemigroups in  full transformation semigroups may be found in \cite{Howie1966,Howie1978}; see also \cite{Erdos1967}.  However, in contrast to many of these examples, the rank and idempotent rank are not equal (apart from small cases) when it comes to the idempotent generated subsemigroup of $K_n$.

%
If $n\leq2$, then $K_n$ has a unique idempotent (the identity element), so we assume $n\geq3$ throughout.

\section{Preliminaries}\label{sect:prelim}

We now describe the \emph{Jones normal forms} given in \cite{BDP}.
These are given in terms of \emph{blocks}, which are defined to be words of the form
$$
h[j,i] = h_jh_{j-1}\dots h_{i+1}h_i
$$
for any $1\leq i\leq j<n$. Also, with the same assumptions on $i,j$ we define an \emph{inverse block} to be a
word of the form $h[i,j]=h_ih_{i+1}\dots h_j$. Note that $h[i,i]=h_i$, which exhausts all blocks that are also
inverse blocks. A block $h[j,i]$ will be called \emph{white} if $i$ and $j$ are of different parity. If both
$i,j$ are odd, then the block $h[j,i]$ is called \emph{blue}, otherwise (if both $i,j$ are even) it is called
\emph{red}. Analogous naming conventions hold for inverse blocks, too.

An element $w\in K_n$ (represented as a word over $\{c,h_1,\dots,h_{n-1}\}$) is said to be in \emph{Jones
normal form} \cite{BDP} (J.n.f.\ for short) if it has the form
$$
c^\ell h[b_1,a_1]\dots h[b_k,a_k]
$$
for some $k,\ell\geq 0$ and increasing sequences $a_1<\cdots<a_k$ and $b_1<\cdots<b_k$. The first principal result
of Borisavljevi\'c, \emph{et.\ al.} \cite[Lemma 1]{BDP} is that every element of $K_n$ is equivalent to a \emph{unique} word 
in J.n.f.

Here we give a digest of their argument, in fact a part of it that is relevant to this note. The first step is
to change the generating set and provide a different presentation for $K_n$. This new generating set will consist
of $c$ and all the blocks $h[j,i]$ (this set trivially generates $K_n$ as it contains all singleton blocks
$h[i,i]=h_i$). Then, a standard argument is provided to show that this new, enlarged set of generators, along with
relations
\begin{align}
h[j,i]h[l,k] &= h[l,k]h[j,i] && \text{whenever }i\geq l+2,\\
h[j,i]h[l,k] &= h[j,k]&& \text{whenever }j\geq k\text{ and }|i-l|=1,\\
h[j,i]h[i,k] &= ch[j,k] &&\text{for all }1\leq k\leq i\leq j\leq n,\\
h[j,i]c &= ch[j,i] &&\text{for all }1\leq i\leq j<n,
\intertext{also define $K_n$. Furthermore, three additional groups of relations were deduced as consequences for $i+2\leq l$:}
h[j,i]h[l,k] &= h[l-2,k]h[j,i+2] && \text{if }j\geq l\text{ and }i\geq k,\\
h[j,i]h[l,k] &= h[j,k]h[l,i+2]&& \text{if }j<l\text{ and }i\geq k,\\
h[j,i]h[l,k] &= h[l-2,i]h[j,k] &&\text{if }j\geq l\text{ and }i<k.
\end{align}
Here is the gist of the argument from \cite{BDP} (clearly contained in the proof of their Lemma 1), which directly shows 
the statement about J.n.f.'s.

\begin{lem}\label{rewr}
Let $\Sigma$ be the rewriting system on words over the alphabet consisting of $c$ and all blocks, obtained by
orienting all the defining relations (1)--(7) from left to right. Then $\Sigma$ is confluent and Noetherian (and thus 
every word has a unique normal form). The normal forms of $\Sigma$ are precisely the J.n.f.'s. \hfill$\Box$
\end{lem}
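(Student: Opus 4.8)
The plan is to prove the three parts in turn: that $\Sigma$ is Noetherian, that it is confluent, and that its normal forms are exactly the J.n.f.'s.

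For termination, I would attach to each word $w$ --- written as an alternating product of powers of $c$ and of blocks --- the tuple $\mu(w)=(\alpha(w),\sigma(w),\iota(w),\gamma(w))\in\mathbb{N}^4$, ordered lexicographically, where $\alpha(w)$ is the number of blocks in $w$; $\sigma(w)=\sum(j-i)$, the sum taken over all blocks $h[j,i]$ of $w$; $\iota(w)$ is the number of pairs of blocks of $w$ (the first lying to the left of the second) whose index ranges $[i,j]$ and $[k,l]$ satisfy $i\ge l+2$; and $\gamma(w)$ is the sum, over all occurrences of $c$ in $w$, of the number of blocks lying to the left of that occurrence. One then checks that each of the rewritings (1)--(7) strictly decreases $\mu$: rules (2) and (3) decrease $\alpha$; rules (5), (6), (7) preserve $\alpha$ and decrease $\sigma$ (by $4$, $2$ and $2$ respectively); rule (1) preserves $\alpha$ and $\sigma$, changes the relative order of no pair of blocks other than the two it swaps, and turns that pair from ``inverted'' into non-inverted, so decreases $\iota$ by exactly $1$; and rule (4) preserves $\alpha,\sigma,\iota$ and decreases $\gamma$ by $1$. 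Since the lexicographic order on $\mathbb{N}^4$ is well-founded, $\Sigma$ is Noetherian.

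For confluence, it suffices by Newman's lemma (applicable since $\Sigma$ is Noetherian) to resolve the critical pairs. Each left-hand side of (1)--(7) is either a product $B_1B_2$ of two blocks subject to side conditions, or a product $Bc$; and the side conditions of (1), of (2)--(3), and of (5)--(7) are mutually exclusive --- they force, respectively, $i-l\ge2$, $|i-l|\le1$ and $i-l\le-2$ for the pattern $h[j,i]h[l,k]$ --- so no word $B_1B_2$ is a redex for two distinct rules. The only overlaps are therefore words $B_1B_2B_3$ with $B_1B_2$ and $B_2B_3$ both redexes, and words $B_1B_2c$ with $B_1B_2$ a redex. The overlaps of the latter type resolve at once: two applications of (4) push $c$ to the front, after which the two ways of rewriting $B_1B_2$ agree. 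The overlaps of the former type are the substance of the proof: for each of the finitely many compatible pairs of rules one uses the side conditions to pin down the relative order of the six endpoints of the index ranges of $B_1,B_2,B_3$, rewrites both ways, and checks that the two results admit a common descendant --- sometimes after one more rewriting step. I expect this critical-pair bookkeeping to be the main obstacle: it is routine but lengthy, and is exactly the verification implicit in the proof of \cite[Lemma~1]{BDP}.

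Finally, the normal forms of $\Sigma$ are precisely the J.n.f.'s. A word is $\Sigma$-irreducible iff no rule applies to it; inapplicability of (4) forces all occurrences of $c$ to precede all blocks, say $w=c^{\ell}h[b_1,a_1]\cdots h[b_k,a_k]$, and inapplicability of (1)--(7) to each adjacent pair $h[b_t,a_t]\,h[b_{t+1},a_{t+1}]$ forces $a_t<a_{t+1}$ and $b_t<b_{t+1}$, by a short case analysis according to whether $|a_t-b_{t+1}|$ is $0$, $1$, or at least $2$; transitivity then shows $w$ is in J.n.f. The same case analysis, read in reverse, shows conversely that no rule applies to a word in J.n.f. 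Combined with termination and confluence, this yields that every word has a unique $\Sigma$-normal form, that this normal form is a J.n.f., and hence (since J.n.f.'s are themselves irreducible) that distinct J.n.f.'s are inequivalent --- the assertion of the lemma, which also recovers \cite[Lemma~1]{BDP}. We remark that, once termination and the characterisation of irreducibles are established, the critical-pair analysis could instead be bypassed: the uniqueness of J.n.f.\ proved in \cite{BDP} via the diagram model immediately forces confluence.
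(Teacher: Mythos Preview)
The paper does not actually give a proof of this lemma: it is stated with a closing $\Box$ and attributed wholesale to \cite[Lemma~1]{BDP}, with the preceding sentence describing it as ``the gist of the argument from \cite{BDP}''. So there is no in-paper proof to compare against; your proposal goes well beyond what the paper itself supplies.

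That said, your sketch is correct. The termination measure $(\alpha,\sigma,\iota,\gamma)$ is well chosen and the rule-by-rule verification holds up: rules (2)--(3) drop $\alpha$; rules (5)--(7) preserve $\alpha$ and drop $\sigma$ by $4$, $2$, $2$ respectively; rule (1) preserves $\alpha,\sigma$ and drops $\iota$ by one (the swapped pair goes from inverted to non-inverted, and no other pair changes relative order); rule (4) preserves $\alpha,\sigma,\iota$ and drops $\gamma$ by one. Your observation that the side conditions on (1), (2)--(3), and (5)--(7) partition according to the sign and magnitude of $i-l$ is correct and cleanly reduces the overlap analysis to the $B_1B_2B_3$ and $B_1B_2c$ patterns. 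The characterisation of irreducibles as J.n.f.'s via the adjacent-pair case analysis is also right: once rule (4) forces the $c$'s to the front, failure of (1)--(3) and (5)--(7) on an adjacent pair $h[j,i]h[l,k]$ forces $i<k$ and $j<l$, and conversely these inequalities block every rule.

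Your closing remark --- that confluence can be recovered from termination and the characterisation of irreducibles, together with the externally established uniqueness of J.n.f.'s from \cite{BDP} --- is a legitimate shortcut, and is in effect exactly how the paper treats the matter: by citing \cite{BDP} rather than carrying out the critical-pair analysis.
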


If $u,v$ are words in the blocks, we write $u\to v$ if $u=u_1xu_2$ and $v=u_1yu_2$ for words $u_1,u_2,x,y$, and where $x$ and $y$ occur on the left and right hand sides of one of equations (1)--(7), respectively.  We write $\to^*$ for the transitive closure of $\to$.  The previous lemma says not only that for any word $u$, $u\to^* v$ for some J.n.f.~$v$.  It says that \emph{any} sequence $u\to u_1\to u_2\to\cdots$ will eventually terminate in a J.n.f., and that this J.n.f.~will be unique.

While working within $\Sigma$, we will freely use inverse blocks $h[i,j]$, $i\leq j$ where the latter is now simply a 
short-hand for the word $h[i,i]\dots h[j,j]$. Also, where appropriate, we will freely use the connection between new and
old generators, because the old generators are (up to renaming) a subset of the new ones, and the connection can be
deduced within $\Sigma$. 

\section{The idempotent generated subsemigroup}\label{sect:IGS}

The set of all idempotent elements of $K_n$ (written via blocks or otherwise)
we write as~$E_n$.  The goal of this section is to describe the elements of $\langle E_n\rangle$, the idempotent generated subsemigroup of $K_n$; see Theorem \ref{main1}.  We do this in three main steps; see Propositions \ref{direct} and \ref{converse} and Lemma \ref{final}.

By $E'_n$ we denote the subset of $E_n$ consisting of all blocks and inverse blocks of length~2, namely
$h[i+1,i]$ and $h[i,i+1]=h[i,i]h[i+1,i+1]$ (by the length of a(n inverse) block $h[j,i]$ we mean $|i-j|+1$). Of course, these 
are trivially checked to be idempotents, as, for example $h[i+1,i]^2=h_{i+1}h_ih_{i+1}h_i=h_{i+1}h_i$. This easily generalises
to the following statement, which we record for completeness.

\begin{lem}\label{white}
A(n inverse) white block is a product of elements of $E'_n$.
\end{lem}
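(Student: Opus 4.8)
The plan is to observe that the result is essentially immediate from the definition of a block as a \emph{word}, together with a parity count. Indeed, a white block $h[j,i]$ has $i$ and $j$ of different parity, so $j-i$ is odd and hence the length $|i-j|+1=j-i+1$ is even, say $j-i+1=2m$ with $m\geq1$. Since $h[j,i]=h_jh_{j-1}\cdots h_{i+1}h_i$ holds literally, I would simply group these $2m$ letters into $m$ consecutive adjacent pairs:
\[
h[j,i] = (h_jh_{j-1})(h_{j-2}h_{j-3})\cdots(h_{i+1}h_i) = h[j,j-1]\,h[j-2,j-3]\cdots h[i+1,i].
\]
The $(k+1)$-st factor here is $h[j-2k,\,j-2k-1]$ for $0\leq k\leq m-1$, which is a genuine length-$2$ block (the index range $1\leq i\leq j<n$ being inherited from that of $h[j,i]$), and every length-$2$ block lies in $E'_n$ by definition. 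This settles the block case.

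For an inverse white block the argument is dual: $h[i,j]=h_ih_{i+1}\cdots h_j$ again has even length $2m$, and grouping from the left gives
\[
h[i,j] = (h_ih_{i+1})(h_{i+2}h_{i+3})\cdots(h_{j-1}h_j) = h[i,i+1]\,h[i+2,i+3]\cdots h[j-1,j],
\]
a product of $m$ length-$2$ inverse blocks, each lying in $E'_n$ (recalling that $h[i,i+1]$ abbreviates $h[i,i]h[i+1,i+1]$).

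I do not anticipate any real obstacle: no relation from $\Sigma$ is invoked, since both factorisations are equalities of words over the block alphabet, and the only point requiring care is the bookkeeping — confirming that ``white'' is exactly the condition making the number of letters even, and that the pairs produced are legitimate length-$2$ (inverse) blocks with indices in range. The genuinely substantive work will instead concern the blue and red blocks, which have odd length and so are not reachable by this naive pairing; those are presumably where new idempotents beyond $E'_n$ must enter.
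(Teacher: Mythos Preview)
Your proof is correct and follows essentially the same approach as the paper: both simply group the letters of $h[j,i]=h_jh_{j-1}\cdots h_i$ into consecutive pairs $h[j,j-1]\cdots h[i+1,i]$, using that $j-i$ odd makes the number of letters even, and treat inverse blocks dually. Your write-up is just a more detailed version of the paper's two-line argument.
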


\begin{proof}
If $j\geq i$ are of different parity, then
$$
h[j,i]=h_jh_{j-1}\dots h_{i+1}h_i = h[j,j-1]\dots h[i+1,i].
$$
The argument for inverse blocks is analogous.
\end{proof}

\begin{lem}\label{parity}
If $k,l$ are of different parity then $h_kh_l$ is a product of elements of $E'_n$. 
\end{lem}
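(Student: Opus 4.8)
The plan is to reduce $h_kh_l$ (with $k,l$ of different parity) to a product of length-$2$ (inverse) blocks by using the confluent rewriting system $\Sigma$ from Lemma~\ref{rewr}, together with Lemma~\ref{white}. First I would dispose of the trivial cases: if $|k-l|=1$, then $h_kh_l=h[k,l]$ if $k<l$ and $h_kh_l=h[k,l]$ (a length-$2$ block) if $k>l$, and since $k,l$ have different parity this is a white block of length $2$, hence already in $E'_n$. So I may assume $|k-l|\geq 2$; and since $k,l$ have different parity in fact $|k-l|\geq 3$.

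Next, suppose first that $k<l$. Then $h_kh_l=h[k,k]h[l,l]$, and since $k+2\leq l$, relation (1) does not directly apply in the oriented system (it requires $i\geq l+2$, i.e.\ $k\geq l+2$, which fails); but the word $h[k,k]h[l,l]$ is already a J.n.f.\ — it has the required form with $a_1=k<a_2=l$ and $b_1=k<b_2=l$ — so no rewriting is possible and this tells me nothing directly. The right move is instead to produce $h_kh_l$ as the normal form of a product of white blocks of length $2$, by running $\Sigma$ \emph{backwards} in spirit: I want to exhibit an explicit factorisation. The cleanest approach is to write $h_kh_l$ as a product of hooks that telescopes. Concretely, for $k<l$ of different parity I claim
$$
h_kh_l = h[k,l-1]\,h[l,k+1],
$$
which one checks by applying relation (2) (oriented left to right, with $j=l-1\geq k+1=k$... ) — more carefully, one computes the right-hand side in $\Sigma$ using relations (2) and (3) repeatedly to collapse it down to $h_kh_l$, the collapse being possible precisely because consecutive indices differ by $1$ and the telescoping leaves no factor of $c$. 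Both $h[k,l-1]$ and $h[l,k+1]$ are (inverse) blocks; $h[k,l-1]$ is white iff $k$ and $l-1$ have different parity, i.e.\ iff $k$ and $l$ have the same parity — which is \emph{not} our case, so this particular factorisation needs adjusting. The honest version is to split off one hook at a time: since $k$ and $l$ differ in parity and $|k-l|\geq3$, write $l=k+2m+1$ for some $m\geq1$ and induct, peeling $h_k$ against $h_l$ using relation (2) in the form $h_kh_l=h_kh_{l-1}h_l$ is wrong — rather use that $h_l = h[l,l]$ and reassociate. I expect the clean statement to be: $h_kh_l$ equals an alternating product of white blocks $h[k,k+1]$, $h[k+2,k+1]$, \dots obtained by inserting $h_{k+1}h_{k+1}$-type resolutions via relation (2); each inserted block is white because consecutive integers always differ in parity, and the leftover endpoints $k,l$ match up by a parity/counting check. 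For $k>l$ the argument is symmetric, using inverse blocks and the mirror relations.

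The main obstacle is bookkeeping: getting the explicit telescoping factorisation right so that (a) it genuinely equals $h_kh_l$ in $\Sigma$ (no stray $c$ appears — this is why relation (3), which introduces $c$, must be avoided and only relation (2) used for collapses), and (b) every factor that appears is a block of length exactly $2$ with endpoints of opposite parity, so that Lemma~\ref{white} (or rather its length-$2$ base case, membership in $E'_n$) applies. Once the factorisation is pinned down, verification is a routine induction on $|k-l|$, using relation (2) to absorb an interior hook and the inductive hypothesis on the shorter product; alternatively one can verify the single identity via the diagram calculus, as the paper permits. I would present the induction on $|k-l|$, base case $|k-l|\in\{1\}$ (impossible here) handled by the white-block observation at the start and $|k-l|=3$ by an explicit three-hook identity, then the inductive step peeling two hooks off each end.
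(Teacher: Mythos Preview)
Your proposal never actually produces a working factorisation; it is an exploratory draft that repeatedly backs away from its own claims. The one concrete identity you write down, $h_kh_l = h[k,l-1]\,h[l,k+1]$ for $k<l$, is not merely of the wrong colour: it is false. A direct computation using $h_ih_{i\pm1}h_i=h_i$ collapses the right-hand side to $h_kh_{k+1}$, not $h_kh_l$. Your fallback idea of ``inserting $h_{k+1}h_{k+1}$-type resolutions'' cannot work either, since $h_{k+1}^2=ch_{k+1}$ introduces a factor of $c$, which is exactly what must be avoided. The induction on $|k-l|$ you sketch at the end has no inductive step spelled out, and the ``peeling two hooks off each end'' manoeuvre is never made precise.

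The missing idea is the following one-line identity. Take $k>l$ (for $k<l$ either argue symmetrically or simply commute, since $|k-l|\geq2$). Repeated use of relation~(2), or equivalently of $h_ih_{i+1}h_i=h_i$, gives
\[
h_k \;=\; h_kh_{k-1}\cdots h_{l+1}\cdot h_{l+2}\cdots h_k \;=\; h[k,l+1]\,h[l+2,k].
\]
Multiplying on the right by $h_l$ and commuting $h_l$ past every letter of $h[l+2,k]$ (all indices there are $\geq l+2$) yields
\[
h_kh_l \;=\; h[k,l]\,h[l+2,k].
\]
Since $k$ and $l$ have different parity, both $h[k,l]$ and $h[l+2,k]$ are white, and Lemma~\ref{white} finishes the job. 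No induction, no telescoping bookkeeping, and no risk of stray $c$'s.
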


\begin{proof}
Assume that $k>l$. If $k=l+1$, then the result is trivial, while if $k\geq l+2$, then 
\begin{align*}
h_kh_l &= (h_kh_{k-1}\dots h_{l+2}h_{l+1}h_{l+2}\dots h_k)h_l
= (h_k\dots h_{l+1}h_l)(h_{l+2}\dots h_k) = h[k,l]h[l+2,k],
\end{align*}
a product of a white block and a white inverse block; hence, the lemma follows from Lemma \ref{white}.
The argument is analogous if $k<l$.
\end{proof}

We are now in position to show the first of the three main steps leading to the characterisation of 
$\langle E_n\rangle$. To this end, for a word $w$ over the alphabet consisting of $c$ and the blocks,
let $\blue{w}$ be the number of blue blocks occurring in $w$; similarly, let $\red{w}$ count the number 
red blocks in $w$, while $\cc{w}$ is simply $|w|_c$, the number of occurrences of $c$ in $w$. We define 
the \emph{characteristic number} of $w$ as
$$
\chi(w) = \cc{w} - |\blue{w}-\red{w}|.
$$

\begin{pro}\label{direct}
Let $w$ be a J.n.f. that is equal (in $K_n$) to a product of idempotents from $E'_n$. Then $\chi(w)$ is
non-negative and even.
\end{pro}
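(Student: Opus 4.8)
The plan is to track how the quantity $\chi$ changes (or fails to change) under the rewriting moves of $\Sigma$, starting from the hypothesis that $w$ is reachable from a product $e_1\cdots e_m$ of elements of $E'_n$. Since $w$ is the J.n.f.~of such a product, by Lemma~\ref{rewr} we have $e_1\cdots e_m \to^* w$, so it suffices to understand the behaviour of $\chi$ along a single rewriting step $u\to u'$ via one of the equations (1)--(7), and to compute $\chi$ on the initial word $e_1\cdots e_m$.

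First I would compute $\chi$ on the starting word. Each generator in $E'_n$ is either a white block $h[i+1,i]$, or an inverse white block $h[i,i+1]=h[i,i]h[i+1,i+1]=h_ih_{i+1}$, which as a word in the \emph{blocks} (not inverse blocks) is a product of two singleton blocks $h[i,i]$ and $h[i+1,i+1]$; one of $i,i+1$ is odd and the other even, so this contributes exactly one blue and one red singleton block. A white block $h[i+1,i]$ contributes no blue or red blocks and no $c$. Hence on $e_1\cdots e_m$ we have $\blue{\cdot}=\red{\cdot}$, so $|\blue{\cdot}-\red{\cdot}|=0$, and $\cc{\cdot}=0$; thus $\chi(e_1\cdots e_m)=0$, which is non-negative and even.

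The heart of the argument is then to show that no rewriting step can make $\chi$ negative or odd — more precisely, that $\chi$ is \emph{invariant} along $\to$, or at worst changes in a controlled way that preserves non-negativity and parity. Here I would go relation by relation through (1)--(7). Relations (1) and (4) merely permute blocks (or move a $c$ past a block), so they change none of $\cc{\cdot}$, $\blue{\cdot}$, $\red{\cdot}$ and hence preserve $\chi$ exactly. Relation (3), $h[j,i]h[i,k]=ch[j,k]$, replaces two blocks by one block and one $c$: here one must check the colour bookkeeping. The colour of a block $h[j,i]$ depends on the parities of $j$ and $i$; writing $\delta$ for the $\{0,1\}$-indicator "is odd", a blue block has $\delta(j)=\delta(i)=1$, a red block $\delta(j)=\delta(i)=0$, and a white block $\delta(j)\ne\delta(i)$. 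One checks that in each relation the \emph{net} contribution to $\blue{\cdot}-\red{\cdot}$ of the two sides agrees up to a sign or up to an amount absorbed by the change in $\cc{\cdot}$; the triangle inequality $|a\pm 1|\ge |a|-1$ together with the gain of one $c$ in (3) then keeps $\chi$ from decreasing, and a parity check (the right-hand side of (3) has $\cc{\cdot}$ increased by $1$ and $|\blue{\cdot}-\red{\cdot}|$ changed by $\pm1$, so $\chi$'s parity is unchanged) keeps it even. Relations (2), (5), (6), (7) replace two blocks by two blocks (or one block); for these I expect $\cc{\cdot}$ is unchanged and a direct parity-of-endpoints computation shows $\blue{\cdot}-\red{\cdot}$ is unchanged up to sign, so $|\blue{\cdot}-\red{\cdot}|$ — and hence $\chi$ — is literally unchanged.

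The main obstacle will be the colour bookkeeping in the shift relations (5)--(7), where an index $i$ is replaced by $i+2$ and $l$ by $l-2$: shifting by $2$ preserves parity, so the colour of each block is genuinely preserved, and that is what makes these relations harmless — but one must verify this carefully against the parity constraints ($|i-l|=1$ in (2), $i+2\le l$ in (5)--(7), etc.) to be sure no white block silently becomes monochromatic or vice versa. Once each of the seven relations is checked, the conclusion is immediate: $\chi(e_1\cdots e_m)=0$, each step preserves parity and does not decrease $\chi$, and J.n.f.'s are reached by finitely many steps, so $\chi(w)\ge 0$ and $\chi(w)$ is even.
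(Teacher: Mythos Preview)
Your proposal is correct and follows essentially the same strategy as the paper: compute $\chi=0$ on the initial product of $E'_n$-generators (white blocks contribute nothing, while each $h[i,i]h[i+1,i+1]$ contributes one blue and one red), then verify rule by rule that each rewriting step of $\Sigma$ preserves the parity of $\chi$ and does not decrease it. The paper states the slightly sharper conclusion $\chi(v)-\chi(u)\in\{0,2\}$ and organises the case analysis by grouping (1)--(2),(4)--(7) together (where $\cc{\cdot}$ and $|\blue{\cdot}-\red{\cdot}|$ are both unchanged) and treating (3) separately, but your argument is the same in substance; note in particular that for (2),(5)--(7) the signed quantity $\blue{\cdot}-\red{\cdot}$ is in fact literally unchanged (your ``up to sign'' hedge is unnecessary), which you can see cleanly by observing that a block $h[a,b]$ contributes $\tfrac12((-1)^{a+1}+(-1)^{b+1})$ to $\blue{\cdot}-\red{\cdot}$ and that the multiset of endpoint parities is preserved by each of these rules.
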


\begin{proof}
If $w$ is a J.n.f. equal to a product of elements from $E'_n$ then there exists a word $w'$ consisting of factors
of the form $h[i+1,i]$ and $h[i,i+1]=h[i,i]h[i+1,i+1]$ such that $w=w'$ holds in $K_n$. Note that these factors
are either white, or blue-red, or red-blue; in any case, their characteristic numbers are $0$. Therefore, 
$\chi(w')=0$. By Lemma \ref{rewr}, $w'\to^\ast w$ holds in $\Sigma$, so there is a finite sequence of rewriting
rules stemming from (1)--(7) which transform $w'$ into $w$. So, our proposition will be proved once we show
that an application of any of these rules in the course of a single step $u\to v$ neither decreases, nor changes
the parity of the characteristic number.

In fact, we claim that $\chi(v)-\chi(u)\in\{0,2\}$, which can be verified by direct inspection of the rules.
It is easy to see that by applying any of the rules (1)-(2) and (4)-(7) we have $\cc{u}=\cc{v}$ and one of the
following happens:
\begin{itemize}
\item[(i)] one or more white blocks are created from a pair of blue and red blocks, or
\item[(ii)] a pair of blue and red blocks is created from a pair of white blocks, or
\item[(iii)] the number of blue and red blocks involved is unchanged.
\end{itemize}
Hence, in all these cases we have $|\blue{u}-\red{u}|=|\blue{v}-\red{v}|$ and so $\chi(u)=\chi(v)$. So, the only
`interesting' rule is (3). Here, one of the following three things can happen:
\begin{itemize}
\item[(i)] the rule takes two white blocks and turns them into one $c$ and one block that is either blue or red, or
\item[(ii)] the rule takes either two blue or two red blocks and turns them into one $c$ and one block of the same 
colour as the initial two, or
\item[(iii)]  the rule takes a white block and a non-white block and turns them into a $c$ and a white block.
\end{itemize}
Any of the above three operations either leaves the characteristic number of a word unchanged, or increases it by $2$.
This completes the proof of the proposition.
\end{proof}

Our next aim is to prove the converse of Proposition \ref{direct}: if $w$ is a J.n.f. such that ${\chi(w)\geq 0}$ is even, 
then $w$ is equivalent to a product of elements of $E'_n$. For this we need three additional lemmas, the third one 
being a folklore exercise in combinatorics on words.

\begin{lem}\label{cb-cr}
Let $h[j,i]$ be a block that is not white (so that $i,j$ are of the same parity). Then $ch[j,i]$ is a product
of elements of $E'_n$.
\end{lem}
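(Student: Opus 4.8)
The plan is to produce an explicit factorisation of $ch[j,i]$ as a product of length-$2$ blocks and inverse blocks, working entirely within the rewriting system $\Sigma$ (so all identities below are to be read in $K_n$, with inverse blocks used as shorthand as permitted after Lemma \ref{rewr}). Since $i,j$ have the same parity, write $j=i+2m$ for some $m\geq 0$; the case $m=0$, i.e.\ $h[j,i]=h_i$, should be handled first, and it follows from $ch_i=h_i^2=h[i,i]h[i,i]$ once we observe (via relation (3) with $j=i=k$) that $h_i^2=ch_i$, so $ch_i = h_i\cdot h_i$ — wait, more carefully, $h_i$ itself is idempotent only when\ldots\ actually $h_i$ is \emph{not} in $E'_n$, so for $m=0$ we instead use relation (3) in the form $h[i,i]h[i,i]=ch[i,i]$ to get $ch_i=h_i h_i$, and then\ldots\ $h_i\notin E'_n$. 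So the genuinely correct base case is $m=1$: here $h[j,i]=h[i+2,i]=h_{i+2}h_{i+1}h_i$ is itself \emph{not} idempotent, but $ch[i+2,i]$ should be rewritten using the idea of Lemma \ref{parity}. Concretely, I expect the clean statement to be: $ch[i+2,i] = h[i+2,i]h[i,i+2]\,h[i+2,i]$ or a similar ``sandwich'', where $h[i+2,i]$ is a (non-white) block and $h[i,i+2]$ a (non-white) inverse block, and then one reduces each of these length-$3$ non-white (inverse) blocks to\ldots\ but they are \emph{not} products in $E'_n$ either, which is exactly the point of the lemma. So the real mechanism must be different.

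The correct approach, I believe, is to exploit relation (3), $h[j,i]h[i,k]=ch[j,k]$, read backwards: the extra factor $c$ is \emph{generated} by such a collision. So I would look for a word $w'$, built from length-$2$ blocks and inverse blocks in $E'_n$, whose reduction to J.n.f.\ forces exactly one application of relation (3) and lands on $ch[j,i]$. The natural candidate, generalising Lemma \ref{white}'s factorisation of white (inverse) blocks, is
$$
w' \;=\; h[i+1,i]\,h[i+2,i+1]\,\cdots\,h[j,j-1]\;\cdot\;h[i,i+1]\,\cdots
$$
chosen so that the first group multiplies up to $h[j,i]$-ish and a subsequent inverse-block group collides with it to throw off one $c$ while leaving a block of the same colour. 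I would first verify by hand (this is the one ``very simple diagrammatic calculation'' the introduction alludes to, or a short $\Sigma$-computation) the key small identity $h[i+1,i]h[i,i+1]h[i+2,i+1]\cdots$ producing $c$ times something, and then induct on $m$: assuming $ch[j-2,i]$ is a product of elements of $E'_n$, multiply on the appropriate side by the white (inverse) blocks $h[j,j-1]$, $h[j-1,j-2]$, etc.\ (which lie in $E'_n$ by definition and whose products are handled by Lemma \ref{white}), and use relations (2) and (5)--(7) to slide the new white blocks past, re-assembling $ch[j,i]$.

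The main obstacle will be getting the parities and the colour bookkeeping exactly right so that the single $c$ appears and the surviving block has endpoints $i$ and $j$ (same parity, hence non-white, as required). In particular one must ensure no \emph{second} application of relation (3) occurs (which would produce $c^2$, overshooting) and that the white blocks used as ``scaffolding'' fully cancel. I expect the cleanest route is: (a) show $ch[i+2,i]\in\langle E'_n\rangle$ directly, by exhibiting $ch[i+2,i]=h[i+2,i+1]\,h[i+1,i]\,h[i,i+1]\,h[i+1,i+2]$ (a product of two blocks and two inverse blocks, all of length $2$, hence all in $E'_n$) and checking the J.n.f.\ of the right-hand side is indeed $ch[i+2,i]$ via $\Sigma$; then (b) for general $j=i+2m$, write $h[j,i]=h[j,i+2]\,h_i$-style decompositions and bootstrap from the base case using Lemma \ref{parity} and Lemma \ref{white} to absorb the white parts, with relation (3) invoked exactly once. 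Verifying step (a) is a finite computation; making the induction in step (b) watertight — specifically controlling \emph{where} in the reduction the one permitted collision happens — is where the care is needed.
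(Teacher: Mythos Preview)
Your plan has two concrete gaps. First, you abandon the case $j=i$, declaring ``the genuinely correct base case is $m=1$,'' but $h[i,i]=h_i$ is a legitimate non-white block (blue if $i$ is odd, red if $i$ is even), and the lemma requires you to express $ch_i$ as a product from $E'_n$; nothing in your scheme ever returns to this case. Second, your proposed identity $ch[i+2,i]=h[i+2,i+1]\,h[i+1,i]\,h[i,i+1]\,h[i+1,i+2]$ is false: reducing the right-hand side in $\Sigma$ yields $c^3h_{i+2}$, not $ch[i+2,i]$ (relation~(3) fires three times along the way, and relation~(2) collapses the block). Ironically, the first two factors alone already do the job, since $h[i+2,i+1]\,h[i+1,i]=ch[i+2,i]$ by a single application of relation~(3); the two extra factors you appended only spoil it.

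The paper's argument is essentially a one-liner, with no induction on $m$. For any $j>1$,
\[
h[j,j-1]\,h[j-1,j] \;=\; h_jh_{j-1}^2h_j \;=\; ch_jh_{j-1}h_j \;=\; ch_j,
\]
a product of two elements of $E'_n$ (for $j=1$ use $j+1$ in place of $j-1$). This already settles the case $j=i$ that you discarded. For $j>i$, simply append the block $h[j-1,i]$, which is \emph{white} since $j-1$ and $i$ have opposite parity, and invoke Lemma~\ref{white}:
\[
h[j,j-1]\,h[j-1,j]\,h[j-1,i] \;=\; ch_j\,h[j-1,i] \;=\; ch[j,i].
\]
No induction and no bookkeeping of ``where the one permitted collision happens'' is needed.
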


\begin{proof}
If $i=j>1$ we have $h[i,i-1]h[i-1,i]=h_ih_{i-1}^2h_i=ch_ih_{i-1}h_i=ch_i$ (if $i=1$ we may use $h_{i+1}$ instead
of $h_{i-1}$). Otherwise, we have
$$
h[j,j-1]h[j-1,j]h[j-1,i]=ch_jh[j-1,i]=ch[j,i],
$$
so the lemma follows from Lemma \ref{white}, bearing in mind that $h[j-1,i]$ is white.
\end{proof}

\begin{lem}\label{c+2}
If the word $w$ is equivalent to a product of elements from $E'_n$ so is $c^2w$.
\end{lem}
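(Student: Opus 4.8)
The goal is to show that multiplying by $c^2$ preserves the property of being a product of elements of $E'_n$. The natural strategy is to produce, from nothing, a word in $E'_n$-factors whose value in $K_n$ equals $c^2$; then $c^2w = (\text{that word})\cdot w$ is again a product of elements of $E'_n$, since by hypothesis $w$ is such a product, and concatenation of $E'_n$-products is an $E'_n$-product. So the entire lemma reduces to the single assertion: \emph{$c^2$ is a product of elements of $E'_n$.}

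First I would look for a short word in the two-letter blocks and inverse blocks $h[i{+}1,i]$ and $h[i,i{+}1]$ that rewrites (via $\Sigma$, i.e.\ via relations (1)--(7)) to $c^2$. The relation (3), $h[j,i]h[i,k]=ch[j,k]$, is the only source of $c$'s, so each $c$ we want must come from one application of (3) with $j=i=k$, i.e.\ from a pattern $h[i,i-1]\cdot h[i-1,i] = h_i h_{i-1}^2 h_i = c h_i$ (as already used in the proof of Lemma~\ref{cb-cr}, with $h_{i+1}$ substituted for $h_{i-1}$ when $i=1$ — this is where $n\geq 3$ is needed). To get $c^2$ I would chain two such patterns and then absorb the leftover single block. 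For instance, starting from the product of $E'_n$-elements
$$
h[i,i-1]\,h[i-1,i]\,h[i,i-1]\,h[i-1,i],
$$
the first two factors give $ch_i$, leaving $ch_i h[i,i-1]h[i-1,i] = c\cdot h_i h_i h_{i-1} \cdots$; more carefully, $h_i\cdot h[i,i-1]h[i-1,i] = h_i\cdot ch_i = c h_i$ again, so the whole word equals $c^2 h_i$. That is $c^2 h_i$, not $c^2$; to fix this I would instead take a word that, after extracting the two $c$'s, leaves a white block, which by Lemma~\ref{white} is harmless, and then note that $c^2$ times a white $E'_n$-product is still what we want — but this gives $c^2\cdot(\text{white})$, still not bare $c^2$. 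The clean fix is to observe we never actually need bare $c^2$: we need $c^2 w$, and $w$ itself (being a nonempty product of $E'_n$-elements, if nonempty) already contains an $h_i$; so it suffices to show $c^2 h_i$ is a product of $E'_n$-elements for each $i$, which the displayed four-factor word does directly. If $w$ is empty this is vacuous.

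The key steps in order: (1) recall $c^2 w = u w$ where $u$ is any $E'_n$-product equal to $c^2$ (or to $c^2$ times something white); (2) exhibit $u = h[i,i-1]\,h[i-1,i]\,h[i,i-1]\,h[i-1,i]$ (with index shift near the boundary) and verify $u = c^2 h_i$ in $K_n$ by two applications of relation (3); (3) since $h_i = h[i,i]$ is a white block, Lemma~\ref{white} lets us absorb it, but in fact we simply prepend this to $w$: if $w$ starts with some block, we have freedom to insert $h_i$; more directly, pick $i$ so that $h_i$ appears at the front of (a rewriting of) $w$, or just use that $c^2 w = c^2 h_i h_i w$ when $w = h_i w'$, absorbing the spare $h_i$ harmlessly. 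The cleanest writeup will probably just prove ``$c^2$ is a product of $E'_n$-elements together with possibly one extra white block'' and combine with Lemma~\ref{white}.

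The main obstacle — really the only subtlety — is the bookkeeping of the leftover block: relation (3) with $j=i=k$ produces $ch_i$, not $c$, so two applications naturally yield $c^2 h_i$ rather than $c^2$, and one must argue that the spurious $h_i$ is absorbable (via Lemma~\ref{white}) or irrelevant (because it can be merged into $w$). I expect the final proof to be three or four lines: exhibit the explicit $E'_n$-word, compute its value using (3) twice, invoke Lemma~\ref{white} for the extra white block, and conclude that prepending this word to a given $E'_n$-product for $w$ yields one for $c^2 w$.
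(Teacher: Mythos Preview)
Your overall strategy---prepend an explicit $E'_n$-word to $w$ so that the result equals $c^2w$---is exactly the paper's approach. The paper simply observes that since $w$ is a (nonempty) product of elements of $E'_n$, one may write $w=h[i{+}1,i]\,w'$ (up to the symmetric case $w=h[i,i{+}1]\,w'$), and then
\[
h[i{+}1,i]\,h[i,i{+}1]\,h[i{+}1,i]\,w' \;=\; c^2\,h[i{+}1,i]\,w' \;=\; c^2w,
\]
so a three-factor prefix suffices and the ``leftover'' block is precisely the first $E'_n$-factor of~$w$.

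Your execution, however, contains two genuine errors. First, the four-factor word $u=h[i,i{-}1]\,h[i{-}1,i]\,h[i,i{-}1]\,h[i{-}1,i]$ does \emph{not} equal $c^2h_i$: since $h[i,i{-}1]\,h[i{-}1,i]=ch_i$, we get $u=(ch_i)(ch_i)=c^2h_i^2=c^3h_i$. The slip is in your line ``$h_i\cdot ch_i = ch_i$'', which is false in $K_n$ because $h_i^2=ch_i\neq h_i$. Second, even granting a word equal to $c^2h_i$, your plan to ``absorb $h_i$ via Lemma~\ref{white}'' fails: $h_i=h[i,i]$ is \emph{not} a white block (the indices $i$ and $i$ have the same parity), so Lemma~\ref{white} does not apply to it, and indeed $h_i\notin\langle E'_n\rangle$ by Theorem~\ref{main1}. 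The clean fix is exactly what the paper does: use three factors rather than four, and choose them so that the residual block is $h[i{+}1,i]$ (an element of $E'_n$), matching the first factor of $w$.
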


\begin{proof}
Without loss of generality, assume that $w=h[i+1,i]w'$ holds in $K_n$ for some word $w'$ over $E'_n$. Then
$$
c^2w = c^2h[i+1,i]w' = h[i+1,i]h[i,i+1]h[i+1,i]w',
$$
and we are done.
\end{proof}

For the next lemma, if $v$ is a word over $\{0,1\}$, we write $|v|$, $|v|_0$ and $|v|_1$ for the length of $v$, the number of $0$'s in $v$ and the number of $1$'s in $v$, respectively.

\begin{lem}\label{comb}
A word $v$ over $\{0,1\}$ is called \emph{balanced} if $|v|_0=|v|_1$. Let $u\in\{0,1\}^\ast$ such that 
$|u|_0-|u|_1=k\geq 0$. Then $u$ can be factorised into a product of balanced words 
and words containing only $0$'s such that the total length of the latter is equal to~$k$.
\end{lem}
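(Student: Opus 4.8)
The plan is to induct on the length of $u$, or equivalently on $k = |u|_0 - |u|_1 \geq 0$ together with $|u|$. The base case $k = 0$ is immediate: if $u$ is already balanced we take the single factor $u$ (and no all-$0$ factors, so the total length of the latter is $0 = k$); the empty word is also covered here.

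For the inductive step, suppose $k \geq 1$. The key observation is that we can locate a leading segment of $u$ that is either a balanced word or a block of $0$'s that we can peel off while decreasing $k$. Concretely, I would read $u$ from the left and track the running count (number of $0$'s seen minus number of $1$'s seen). If the very first letter is a $1$, then at some point the running count first returns to $0$ (this must happen, since the global count is $k \geq 1 > 0$, so in particular the count is eventually positive and hence passes through every intermediate value, in particular it was negative right after the first letter and must come back); the prefix up to that point is a balanced word $v_1$, and we write $u = v_1 u'$ with $|u'|_0 - |u'|_1 = k$ and $|u'| < |u|$; apply induction to $u'$. If instead the first letter is a $0$, I peel off the maximal prefix of $0$'s, say $0^m$ with $m \geq 1$, writing $u = 0^m u'$. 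Then either $u'$ is empty (in which case $m = k$ and we are done, the single all-$0$ factor $0^m$ having total length $k$), or $u'$ begins with a $1$, so as in the previous case $u' = v_1 u''$ where $v_1$ is balanced; now $u = 0^m v_1 u''$ with $|u''|_0 - |u''|_1 = k - m$. Here I must be slightly careful: if $m \leq k$, apply induction to $u''$ (whose excess is $k - m \geq 0$), and the all-$0$ factors will have total length $m + (k-m) = k$. If $m > k$, then split $0^m = 0^k \cdot 0^{m-k}$; the excess of $0^{m-k} v_1 u''$ is exactly $0$, so recursing on it produces only balanced and all-$0$ factors whose total all-$0$ length is $0$, while the isolated prefix $0^k$ contributes the required $k$.

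The only mildly delicate point — and the one I would state carefully rather than wave at — is the bookkeeping of the total length of the all-$0$ factors across the recursion, especially in the case $m > k$ where one must avoid ``overcounting'' $0$'s: the fix is exactly the split $0^m = 0^k \cdot 0^{m-k}$ described above, after which the remainder has zero excess and contributes no further mandatory $0$-blocks. Everything else is a routine finite induction. An alternative, perhaps cleaner, presentation avoids cases entirely: delete from $u$ a choice of $k$ of its $0$'s so that the resulting word $\tilde u$ is balanced (possible since deleting $0$'s only lowers the excess, one at a time, from $k$ to $0$), factor $\tilde u$ greedily into minimal balanced blocks by the running-count argument, and then reinsert the $k$ deleted $0$'s; each reinserted $0$ either falls between two balanced blocks or inside one, and in the latter case it splits that block into two pieces each of which is ``balanced-up-to-a-single-extra-$0$'', which we further chop at the point the running count of the sub-block first hits its minimum. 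I would most likely go with whichever of these two write-ups is shorter, noting it is folklore.
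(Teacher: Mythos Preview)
Your approach is essentially the paper's --- induct on length, and at each step peel off either an all-$0$ prefix or a balanced prefix beginning with $1$ --- but you overcomplicate the first case and introduce a real gap in doing so. After writing $u = 0^m u'$ with $u'$ beginning with $1$, you assert ``as in the previous case $u' = v_1 u''$ where $v_1$ is balanced''. That step can fail: the running-count argument for the existence of $v_1$ required the final count of the word to be nonnegative, but $u'$ has excess $k - m$, which is negative when $m > k$. For instance, with $u = 00011$ one has $k=1$, $m=3$, $u' = 11$, and $u'$ has no nonempty balanced prefix. Your later fix for the case $m > k$ happens not to actually use the decomposition $u' = v_1 u''$ (it only needs that $0^{m-k}u'$ is balanced), so the argument is salvageable: perform the case split on $m$ versus $k$ \emph{before} looking for $v_1$, and in the $m > k$ branch simply write $u = 0^k \cdot (0^{m-k}u')$ directly.

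The paper sidesteps all of this by peeling off a \emph{single} $0$ rather than the maximal block: if the first letter is $0$, take it as an all-$0$ factor of length~$1$ and recurse on the rest, which has excess $k-1 \geq 0$. This keeps every recursive call in the nonnegative-excess regime and eliminates the case analysis on $m$ entirely. Your ``alternative'' deletion-and-reinsertion sketch is not obviously cleaner and would need more work to make precise (the reinsertion step in particular); I would drop it in favour of the simpler argument.
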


\begin{proof}
For a word $v$ over $\{0,1\}$, write $k(v)=|v|_0-|v|_1$.  We prove the lemma by induction on $|u|+k(u)$.  If $k(u)=0$, then the result is trivial; this includes the base case of the induction, in which $|u|+k(u)=0$.  Now assume that $k(u)\geq1$ (so also $|u|\geq1$).  Write $u=x_1\cdots x_m$, where each $x_i\in\{0,1\}$.  If $x_1=0$, then $k(x_2\cdots x_m)=k(u)-1$, and an induction hypothesis completes the proof in this case.  If $x_1=1$, then, since $k(u)\geq0$, there exists $2\leq r\leq m$ such that $k(x_1\cdots x_r)=0$ (i.e., $x_1\cdots x_r$ is balanced).  But then $u=(x_1\cdots x_r)(x_{r+1}\cdots x_m)$, with $k(x_{r+1}\cdots x_m)=k(u)$, and we are again done after applying an induction hypothesis.
\end{proof}

%
%

\begin{pro}\label{converse}
Let $w$ be a J.n.f.\ such that $\chi(w)\geq 0$ is even. Then $w$ is equal to a product of elements from $E'_n$.
\end{pro}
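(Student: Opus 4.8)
The plan is to write $w = c^\ell h[b_1,a_1]\cdots h[b_k,a_k]$ in J.n.f., let $p$ and $q$ be the numbers of blue and red blocks among the $h[b_i,a_i]$, and argue by manipulating these blocks in pairs according to colour. The white blocks among the $h[b_i,a_i]$ are already products of elements of $E'_n$ by Lemma~\ref{white}, so they can be ignored; the real content is dealing with the blue and red blocks together with the $c$'s. Without loss of generality assume $p \geq q$, so that $\chi(w) = \ell - (p-q)$, and the hypothesis is $\ell \geq p - q$ with $\ell - (p-q)$ even.

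First I would pair up blue blocks with red blocks: I claim a blue block times a red block (in either order, after commuting past irrelevant white blocks via relations (1), (5)--(7)) can be rewritten, at the cost of absorbing one $c$, into a product of white blocks, or handled directly. More carefully, the natural move is to read off from the colours: using relation (3) in reverse is not available, so instead I would use Lemma~\ref{cb-cr}, which says $c \cdot (\text{non-white block})$ is a product of $E'_n$-elements. Thus each of the $p - q$ "excess" blue blocks can be matched with one of the $p-q$ available $c$'s (there are at least that many since $\ell \geq p-q$) and converted via Lemma~\ref{cb-cr}. That leaves $q$ blue blocks, $q$ red blocks, and $\ell - (p-q)$ remaining $c$'s, the latter being a non-negative even number.

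For the $q$ blue--red pairs and the leftover even number of $c$'s, I would proceed as follows. Each remaining blue block and each remaining red block, when preceded by a $c$, becomes a product of $E'_n$-elements by Lemma~\ref{cb-cr}; but I only have $\ell - (p-q)$ spare $c$'s, which need not be as large as $2q$. Instead, the cleaner route is: a blue block times a red block is, up to commutation, of the form $h[b,a]h[d,c]$ with $a,b$ odd and $c,d$ even (or the reverse); inspecting relations (2), (5)--(7) one checks this product is already a product of white (inverse) blocks — indeed in the "interesting" direction relation (3) consumed a $c$, so its reverse should produce one, meaning $h[b,a]h[d,c]$ with the parities staggered is white-decomposable without any $c$ at all. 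This is the step I expect to be the main obstacle: verifying that every blue/red pair that can coexist in a J.n.f. genuinely reduces to white blocks, possibly after using Lemma~\ref{parity} to break a mismatched-parity product $h_kh_l$ into $E'_n$-elements. Once all $q$ blue--red pairs are eliminated, only the $\ell - (p-q)$ leftover $c$'s and a product of $E'_n$-elements remain; since that number of $c$'s is even and non-negative, repeated application of Lemma~\ref{c+2} (which appends $c^2$ to any $E'_n$-product) finishes the argument — provided the remaining $E'_n$-product is non-empty, which it is unless $w$ itself is white-only, in which case there is nothing left to prove.

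So the skeleton is: (i) discard white blocks via Lemma~\ref{white}; (ii) pair the excess blue blocks with $c$'s and convert via Lemma~\ref{cb-cr}; (iii) show each balanced blue--red pair is white-decomposable (using Lemma~\ref{parity} to handle the parity bookkeeping, and here Lemma~\ref{comb} on a $\{0,1\}$-word recording the blue/red pattern may be exactly what organises which blocks get paired with which); (iv) mop up the even surplus of $c$'s with Lemma~\ref{c+2}. The delicate points are the commutation arguments needed to bring a blue and a red block adjacent in a J.n.f. (relations (5)--(7) can shift indices by $2$, so one must check the colours are preserved — which they are, since shifting by $2$ preserves parity) and the clean statement of step (iii).
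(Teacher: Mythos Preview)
Your reductions (i), (ii) and (iv), together with the use of Lemma~\ref{comb} to organise which non-white blocks become ``excess'' and which get grouped into balanced segments, mirror the paper's reductions exactly: the paper first reduces to $\chi(w)=0$ via Lemma~\ref{c+2}, then uses Lemma~\ref{comb} and Lemma~\ref{cb-cr} to reduce to what it calls \emph{tightly balanced} J.n.f.'s (contiguous balanced blue/red segments, possibly with white blocks interspersed). So far so good.

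The genuine gap is step~(iii). Your plan there is to pair each blue block with a red block and argue, via Lemma~\ref{parity} and relations (5)--(7), that the pair is ``white-decomposable''. Two obstructions: first, a J.n.f.\ is by definition a normal form for the rewriting system, so none of the block relations (1)--(7) apply to it in the forward direction; you cannot use (5)--(7) to commute a blue block past an intervening white one without leaving J.n.f.\ territory, and going backwards does not turn coloured pairs into white ones. Second, and more seriously, even if you could deal with a single adjacent blue--red pair (which is already nontrivial), the pairing strategy does not induct: in a tightly balanced segment such as (blue, blue, red, red), handling the inner pair leaves the outer pair separated by an $E'_n$-word that is no longer a single block, and there is no obvious way to continue.

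The paper's substitute for your step~(iii) is a quite different induction, on the weight (number of blocks) of a tightly balanced J.n.f. The trick is to take the maximal initial \emph{stairway} (the longest prefix with $a_{i+1}=a_i+1$), strip the bottom letter $h_{a_i}$ off each block in it, and slide these stripped letters to the right past the remaining blocks, where they reassemble as a single inverse block $h[a_1,a_q]$. What is left inside is again a J.n.f., of strictly smaller weight, and a parity count shows it still has equal numbers of blue and red blocks; the boundary pieces are white (or handled by one appeal to Lemma~\ref{parity}). This is the key mechanism that makes the induction close, and it is what your proposal is missing.
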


\begin{proof}
We begin by several reductions of the statement to its special cases. First of all, we can assume without loss
of generality that $\chi(w)=0$. Indeed, write $w=c^{\cc{w}}w'$, where $w'$ is the part of $w$ containing no
occurrences of $c$. Then
$$
w=c^{\chi(w)}c^{|\blue{w}-\red{w}|}w',
$$
so if were able to prove that $c^{|\blue{w}-\red{w}|}w'$ is a product of elements of $E'_n$, the same would be
true for $w$ by repeated applications of Lemma \ref{c+2} (since $\chi(w)$ is even).

Furthermore, call a J.n.f.\ \emph{tightly balanced} if it contains no occurrences of $c$, has the same number of 
blue and red blocks, and cannot be factorised into shorter J.n.f.'s with the previous two properties (if the J.n.f.\ is not simply a single white block, this
necessarily implies that neither its first nor its last blocks can be white, in fact, exactly one of them is blue 
and the other is red). We claim that it suffices to prove the statement of the proposition for tightly balanced 
J.n.f.'s only. Indeed, let $w$ be an arbitrary J.n.f.\ such that $\chi(w)=0$. Without loss of generality, assume 
that $\blue{w}\geq\red{w}$ (otherwise just switch the roles of blue and red). 
Form a binary sequence by inspecting $w$ from left to right, ignore every $c$ and every white block,
writing down a $0$ for each blue block and $1$ for each red block. We end up with a word $u$ where $|u|_0-|u|_1=
\blue{w}-\red{w}=\cc{w}$. By Lemma \ref{comb}, there is a factorisation of $u$ such that each factor is either
a balanced word, or a sequence of $0$'s. Furthermore, we may assume that this factorisation is maximal in the
sense that none of the balanced words involved can be factorised further into balanced factors (such factors
must have different first and last letters). Then, to each factor $u'$ of $u$ that is a balanced word, there 
naturally corresponds a factor of $w$ that is a tightly balanced J.n.f. (by starting with the non-white block 
inducing the  first letter of $u'$ and concluding with the also non-white block inducing the last letter of $u'$; 
note that this may involve a number of white blocks in between). What is left outside these tightly balanced 
factors of $w$ is $c^{\cc{w}}$, $\cc{w}$ stand-alone blue blocks (corresponding to stand-alone $0$'s in $u$) and 
an unspecified number of white blocks. By commuting the $c$'s next to these stand-alone blue blocks, we conclude 
that $w$ can be written as a product of two types of factors:
\begin{itemize}
\item tightly balanced J.n.f.'s (including white blocks),
\item blue blocks multiplied by $c$.
\end{itemize}
Thus, if we were able to prove the proposition for tightly balanced blocks, the general case would follow
immediately by Lemma \ref{cb-cr}.

So, assume that $w=h[b_1,a_1] \dots h[b_r,a_r]$ is a tightly balanced J.n.f.; here $r$ is called the \emph{weight}
of $w$. We proceed by induction on $r$. If $r=1$, then $w$ is just a white block, whence we are done by Lemma
\ref{white}. Hence, assume that $r\geq 2$ and that all tightly balanced J.n.f.'s of weight $<r$ are indeed products
of elements of $E'_n$. There will be no loss of generality in assuming that $h[b_1,a_1]$ is blue, so that $a_1,b_1$
are odd. By the tightly balanced condition, $h[b_r,a_r]$ is then red.

We call a J.n.f.\ $h[d_1,c_1]h[d_2,c_2]\dots h[d_s,c_s]$ a \emph{stairway} if $c_{i+1}-c_i=1$ for all $1\leq i<s$. 
Let $q$ be the length of the maximal prefix of $w$ that is a stairway; so, $a_i=a_1+i-1$ for $1\leq i\leq q$, 
but $a_{q+1}\geq a_q+2$ (or, alternatively, there's no such $a_{q+1}$ at all if $r=q$). Then, the principal idea is
to `shave off' the bottoms of the blocks belonging to this maximal initial stairway of $w$ and `float' them to
the right; more precisely, we have:
\begin{align*}
w &= h[b_1,a_1]h[b_2,a_2] \dots h[b_q,a_q]h[b_{q+1},a_{q+1}] \dots h[b_r,a_r]\\ 
  &= (H[b_1,a_1+1]h_{a_1})(H[b_2,a_2+1]h_{a_2}) \dots (H[b_q,a_q+1]h_{a_q}) h[b_{q+1},a_{q+1}] \dots h[b_r,a_r] \\
	&= \Big(H[b_1,a_1+1]H[b_2,a_2+1] \dots H[b_q,a_q+1]h[b_{q+1},a_{q+1}] \dots h[b_r,a_r]\Big)h_{a_1}\dots h_{a_q}\\
	&= \Big(H[b_1,a_1+1]H[b_2,a_2+1] \dots H[b_q,a_q+1]h[b_{q+1},a_{q+1}] \dots h[b_r,a_r]\Big)h[a_1,a_q],
\end{align*}
where $H[b_s,a_s+1]$ is $h[b_s,a_s+1]$ if $b_s>a_s$ and an empty word otherwise.  Notice here that $h[a_1,a_q]$ is an inverse block of length $q$, and the expression in the parenthesis in the last displayed line is a J.n.f.\ of weight $\leq r$.


Now we consider two cases depending on the parity of $q$, noting that this is the same as the parity of $a_q$. First, let $q$ be odd. In that case we cannot have $q=r$
(because $a_r$ is even), so we can transform $w$ further into
\begin{align*}
w &= H[b_1,a_1+1]\Big(H[b_2,a_2+1] \dots H[b_q,a_q+1]h[b_{q+1},a_{q+1}] \dots h[b_{r-1},a_{r-1}]\Big)\times\\
  & \quad\quad\quad\quad\quad\quad\times  H[b_r,a_r+1] (h_{a_r}h_{a_1}) H[a_1+1,a_q],
\end{align*}
with a similar convention about the use of $H$ in inverse blocks. Here, all three capital $H$'s outside the 
parentheses are white blocks or inverse blocks or empty, so they are products of elements from $E'_n$, as is
$h_{a_r}h_{a_1}$ by Lemma \ref{parity}. Hence, it suffices to show that the word within the parentheses is a product of elements of $E_n'$.  To do this, we will focus on how the colours of the blocks within the
parenthesis changed. By replacing $h[b_s,a_s]$ ($2\leq s\leq q$) by $H[b_s,a_s+1]$, any blue or red block
either turns white or vanishes altogether. In turn, a white block is turned blue if $s$ is even and red if
$s$ is odd. Also, notice that $h[b_s,a_s]$ can be blue only if $s$ is odd, while it can be red only if $s$ is
even. In other words, for even values of $s$, white blocks turn blue and red blocks turn white (or they disappear),
while for odd values of $s$ white blocks turn red and blue blocks turn white (or they vanish). 
So, if there were $m$ blue and $p$ red blocks among $h[b_s,a_s]$, $2\leq s\leq q$, then after the 
`shaving off' procedure we have $(q-1)/2-p$ blue blocks and $(q-1)/2-m$ red blocks among $H[b_s,a_s]$, 
$2\leq s\leq q$. However, note that the \emph{difference} between the number of blue and red blocks has not
changed at all by transforming $h[b_2,a_2] \dots h[b_q,a_q]$ into $H[b_2,a_2+1] \dots H[b_q,a_q+1]$; in both cases
it is $|m-p|$. This suffices to conclude that the J.n.f.
$$
H[b_2,a_2+1] \dots H[b_q,a_q+1]h[b_{q+1},a_{q+1}] \dots h[b_{r-1},a_{r-1}]
$$
has an equal number of blue and red blocks (because such was $$h[b_2,a_2] \dots h[b_{r-1},a_{r-1}],$$ which is 
just the original J.n.f.\ $w$ stripped of its outermost blocks), and hence, by Lemma~\ref{comb} and the previously 
presented reduction to the case of tightly balanced J.n.f.'s, it is a product of tightly balanced J.n.f.'s of 
weight $<r$ (since its total weight is $\leq r-2$). By induction hypothesis, it is a product of elements of $E'_n$.

Finally, suppose $q$ is even.  Recall  that $w=H[b_1,a_1+1]w'h[a_1,a_q]$, where
$$w' = H[b_2,a_2+1] \dots H[b_q,a_q+1]h[b_{q+1},a_{q+1}] \dots h[b_r,a_r]$$
is a J.n.f.\ of weight $<r$.
This time, $h[a_1,a_q]$ is a white inverse block, and so a product of elements of $E_n'$, by Lemma \ref{white}.  A counting argument analogous to the previous case shows that $H[b_1,a_1+1]w'$ has the same number of blue and red blocks.  But $H[b_1,a_1+1]$ is still either empty or a white block, so it follows that $w'$ has the same number of blue and red blocks, and the proof concludes as in the previous case.
\end{proof}

%

Everything is in place to lay out the third ingredient, showing that $\langle E_n\rangle=\langle E'_n\rangle$. 
For this, it suffices to show that every idempotent of $K_n$ is a product of elements from $E'_n$, by arguing
that it falls under the scope of the previous proposition.

\begin{lem}\label{final}
Let $w$ be a J.n.f.\ representing an element of $E_n$. Then $\cc{w}=0$ and $\blue{w}=\red{w}$.
\end{lem}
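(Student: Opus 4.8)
The plan is to show that a Jones normal form $w = c^\ell h[b_1,a_1]\cdots h[b_k,a_k]$ which is idempotent in $K_n$ must satisfy $\ell = 0$ and $\blue{w} = \red{w}$. The key observation is that the group of units (or, more to the point, the structure of $K_n$ modulo the ideal generated by $c$) is trivial, so the exponent of $c$ must behave additively in a way that is incompatible with idempotency unless it is zero; and separately, there is a homomorphism from $K_n$ detecting the parity imbalance $\blue{w} - \red{w}$ which must vanish on idempotents.

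First I would handle the statement $\cc{w} = 0$. Here I would use the diagrammatic description recalled in the introduction: $K_n$ is isomorphic to the monoid of pairs $(c^m, \alpha)$ with $\alpha$ a planar Brauer diagram and $m \geq 0$, with multiplication $(c^m,\alpha)(c^p,\beta) = (c^{m+p+\tau(\alpha,\beta)}, \alpha\beta)$. If $w$ corresponds to $(c^m, \alpha)$ and $w^2 = w$, then comparing the $c$-exponents gives $2m + \tau(\alpha,\alpha) = m$, i.e. $m + \tau(\alpha,\alpha) = 0$; since $m, \tau(\alpha,\alpha) \geq 0$ this forces $m = 0$ (and also $\tau(\alpha,\alpha)=0$). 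Since the J.n.f.\ is unique and the $c$-exponent $\ell$ in the J.n.f.\ equals $m$ (the pair $(c,1)$ is the image of the generator $c$, and normal forms collect all $c$'s at the front), we get $\cc{w} = \ell = 0$. This is the single "very simple diagrammatic calculation" the introduction warns about.

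For the second claim, $\blue{w} = \red{w}$, I would exhibit a monoid homomorphism $\varphi \colon K_n \to \mathbb{Z}$ (or to $\mathbb{Z} \cup \{\text{something}\}$) that reads off $\blue{\cdot} - \red{\cdot}$ on normal forms, or more robustly, argue directly on normal forms as in the proof of Proposition~\ref{direct}. Concretely, I would check that the quantity $\blue{u} - \red{u}$ is invariant under each of the rewriting rules (1)--(7): indeed in the analysis already carried out for Proposition~\ref{direct} it was observed that each rule either creates white blocks from a blue-red pair, creates a blue-red pair from white blocks, leaves the blue/red count unchanged, or (rule (3)) converts blue-red to a single $c$ plus white, or two same-colour blocks to a $c$ plus same-colour, or white-plus-nonwhite to $c$-plus-white. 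In every case $\blue{\cdot} - \red{\cdot}$ is unchanged. Hence the map sending a word to $\blue{\cdot} - \red{\cdot}$ of its J.n.f.\ is well-defined on $K_n$ and in fact a homomorphism to $(\mathbb{Z}, +)$, since concatenation adds the counts and confluence means the normal form of a product has blue/red imbalance equal to the sum. An idempotent then satisfies $d = 2d$ in $\mathbb{Z}$ where $d = \blue{w} - \red{w}$, forcing $d = 0$, i.e.\ $\blue{w} = \red{w}$.

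The main obstacle I anticipate is the bookkeeping needed to confirm that $\blue{\cdot}-\red{\cdot}$ is genuinely a homomorphism: one must be slightly careful that the J.n.f.\ of $uv$ is obtained from the concatenation of the J.n.f.'s of $u$ and $v$ by rewriting, and that rewriting preserves the imbalance — but this is already essentially contained in Lemma~\ref{rewr} together with the rule-by-rule inspection behind Proposition~\ref{direct}, so it should be routine. With $\cc{w} = 0$ and $\blue{w} = \red{w}$ established, $w$ falls under the hypothesis of Proposition~\ref{converse} ($\chi(w) = 0 \geq 0$ even), so every element of $E_n$ lies in $\langle E'_n\rangle$, giving $\langle E_n\rangle = \langle E'_n\rangle$ as desired.
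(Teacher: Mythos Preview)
Your argument for $\cc{w}=0$ via the diagrammatic model is fine and matches the spirit of the paper's ``immediate'' remark. The second half, however, has a genuine gap: the signed quantity $\blue{\cdot}-\red{\cdot}$ is \emph{not} invariant under rule~(3), so it does not induce a homomorphism $K_n\to(\mathbb{Z},+)$. Concretely, rule~(3) applied to $h[3,1]\,h[1,1]$ (two blue blocks, $\blue{}-\red{}=2$) yields $c\,h[3,1]$ (one blue block, $\blue{}-\red{}=1$). Your case list for rule~(3) is also off: a ``blue--red'' pair cannot occur on the left of~(3) because the two blocks share the middle index $i$, and you omit the case ``two white $\to c+(\text{blue or red})$''. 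In every genuine instance of rule~(3) the signed difference shifts by $\pm1$. (More generally, no homomorphism to an abelian group can separate the $h_i$ by parity: the relation $h_i^2=ch_i$ forces $\varphi(h_i)=\varphi(c)$ for all $i$.)

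The repair is short and is essentially the paper's argument. Once $\cc{w}=0$ is known, also $\cc{ww}=0$; since $ww\to^\ast w$ in $\Sigma$ and rule~(3) is the only rule that changes (in fact increases) the $c$-count, rule~(3) is never used in this rewriting. For the remaining rules (1)--(2), (4)--(7) the analysis in Proposition~\ref{direct} shows that $\blue{\cdot}-\red{\cdot}$ (equivalently $\chi$) is preserved exactly. Hence $\blue{w}-\red{w}=\blue{ww}-\red{ww}=2(\blue{w}-\red{w})$, giving $\blue{w}=\red{w}$. The paper phrases this via $\chi$: with $\cc{w}=0$ one has $\chi(ww)=2\chi(w)$, and the rewriting $ww\to^\ast w$ together with the observation that rule~(3) is absent forces $2\chi(w)=\chi(w)$, i.e.\ $\chi(w)=0$.
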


\begin{proof}
The conclusion $\cc{w}=0$ is immediate. A direct consequence of this is that $\chi(ww)=2\chi(w)$. However, in
$\Sigma$ we have $ww\to^\ast w$, and thus, by the argument from the proof of Proposition \ref{direct}, we get
$$
2\chi(w)=\chi(ww)\leq \chi(w).
$$
This is possible only if $\chi(w)=|\blue{w}-\red{w}|=0$, so the lemma follows.
\end{proof}

Summing up, we have proved the following result.

\begin{thm}\label{main1}
Assume $w\in K_n$ is represented in its Jones normal form. Then $w\in\langle E_n\rangle$ (the idempotent generated
subsemigroup of $K_n$) if and only if $\chi(w)$ is non-negative and even. \hfill$\Box$
\end{thm}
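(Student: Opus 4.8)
The plan is to deduce Theorem~\ref{main1} by assembling Propositions~\ref{direct} and~\ref{converse} together with Lemma~\ref{final}, the real content being the identity $\langle E_n\rangle=\langle E'_n\rangle$, after which the characterisation via the characteristic number is immediate.

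For the backward implication, suppose $\chi(w)\geq 0$ is even. Then Proposition~\ref{converse} gives that $w$ equals (in $K_n$) a product of elements of $E'_n$, and since $E'_n\subseteq E_n$ this shows $w\in\langle E_n\rangle$.

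For the forward implication, suppose $w\in\langle E_n\rangle$, and write $w=e_1\cdots e_m$ with each $e_i\in E_n$. Putting each $e_i$ into its Jones normal form $w_i$, Lemma~\ref{final} tells us that $\cc{w_i}=0$ and $\blue{w_i}=\red{w_i}$, so that $\chi(w_i)=0$ is non-negative and even; Proposition~\ref{converse} then shows each $w_i$, hence each $e_i$, is a product of elements of $E'_n$. Concatenating these expressions, $w$ itself is a product of elements of $E'_n$, and since $w$ is already presented in Jones normal form, Proposition~\ref{direct} applies and yields that $\chi(w)$ is non-negative and even.

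The only mild care required is bookkeeping around the distinction between ``a word in J.n.f.'' and ``an element of $K_n$ equal to some product'': Lemma~\ref{final} is exactly what licenses running Proposition~\ref{converse} on the Jones normal form of each idempotent factor, and Proposition~\ref{direct} is then applied to the original J.n.f.\ $w$. Since the substantive arguments have already been carried out, there is no genuine obstacle left; this step is a short assembly. It is worth recording that the argument in fact establishes the sharper statement $\langle E_n\rangle=\langle E'_n\rangle$, that is, $\langle E_n\rangle$ is generated by the length-two blocks and inverse blocks $h[i+1,i]$ and $h[i,i+1]$, which is the natural starting point for the rank and idempotent rank computations of Theorem~\ref{main2}.
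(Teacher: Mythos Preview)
Your assembly is correct and matches the paper's own approach exactly: the paper also deduces Theorem~\ref{main1} by first using Lemma~\ref{final} together with Proposition~\ref{converse} to obtain $\langle E_n\rangle=\langle E'_n\rangle$, and then invoking Propositions~\ref{direct} and~\ref{converse} for the two directions. Your remark that this yields the sharper statement $\langle E_n\rangle=\langle E'_n\rangle$ is precisely what the paper records just before Lemma~\ref{final}.
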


\section{Rank and idempotent rank}\label{sect:RIR}

Recall that the \emph{rank}, $\rank(M)$, of a monoid $M$ is the least cardinality of a (monoid) generating set for $M$.  If $M$ is idempotent generated, the \emph{idempotent rank}, $\idrank(M)$, is defined analogously in terms of generating sets consisting of idempotents.  In this final section, we calculate the rank and idempotent rank of $\langle E_n\rangle$.  Before we do this, we first need to recall some ideas from semigroup theory.  For more details, the reader may consult Howie's monograph \cite{Howie}.

With this in mind, let $S$ be a semigroup, and let $S^1$ be the result of adjoining an identity element to $S$ if $S$ was not already a monoid.  Recall that \emph{Green's relations} $\R,\L,\J,\H,\D$ are defined on $S$ by
\begin{gather*}
x\,\R\,y \Leftrightarrow xS^1=yS^1,\quad 
x\,\L\,y \Leftrightarrow S^1x=S^1y,\quad  
x\,\J\,y \Leftrightarrow S^1xS^1=S^1yS^1,\\
\H=\R\cap\L,\quad  \D=\R\circ\L=\L\circ\R.
\end{gather*}
If $x\in S$, we write $J_x$ for the $\J$-class of $S$ containing $x$.  The $\J$-classes of $S$ are partially ordered by $J_x\leq J_y \Leftrightarrow x\in S^1yS^1$.  If $J$ is a $\J$-class of $S$, then the \emph{principal factor} of $J$ is the semigroup $J^\star$ defined on the set $J\cup\{0\}$, where $0$ is a new symbol not belonging to $J$, and with product $\star$ defined by
\[
x\star y = \begin{cases}
xy &\text{if $x,y,xy\in J$}\\
0 &\text{otherwise.}
\end{cases}
\]
As noted in \cite{Gr}, if $S$ is generated as a semigroup by a subset $X\subseteq S$, then clearly $X$ contains a generating set for the principal factor of any maximal $\J$-class.

Green's relations on $K_n$ are characterised (in terms of the diagrammatic representation) in \cite{LFG}.  We will not need to recall these characterisations in their entirety.  But of importance is that the $\D$ and $\J$ relations coincide, that the $\H$ relation is the equality relation, that $\{1\}$ is the unique maximal $\J$-class, that the set $D=\{h[i,j]:1\leq i,j<n\}$ consisting of all blocks and inverse blocks is a $\D$-class, and that
\[
h[i,j]\,\R\, h[k,l] \Leftrightarrow i=k \quad\text{and}\quad
h[i,j]\,\L\, h[k,l] \Leftrightarrow j=l .
\]
Note that, by Theorem \ref{main1}, 
\[
D\cap\langle E_n\rangle = \{h[i,j]:1\leq i,j<n,\ \text{$i,j$ are of opposite parity}\}
\]
is the set of all white blocks and inverse blocks.  Now put
\[
D_1 = \{ h[i,j]\in D\cap\langle E_n\rangle : \text{$i$ is odd} \}
\quad\text{and}\quad
D_2 = \{ h[i,j]\in D\cap\langle E_n\rangle : \text{$i$ is even} \}.
\]

\begin{lem}\label{lem:J}
The sets $D_1$ and $D_2$ are distinct $\J$-classes of $\langle E_n\rangle$.  Furthermore, $D_1$ and $D_2$ are incomparable in the order on $\J$-classes.
\end{lem}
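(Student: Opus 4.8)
The plan is to show two things: first that $D_1$ and $D_2$ are each single $\J$-classes of $\langle E_n\rangle$, and second that they are incomparable. Throughout I will use freely the characterisation from Theorem~\ref{main1}, the description of Green's relations on $K_n$ from \cite{LFG} recalled above, and the rewriting system $\Sigma$.

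First I would establish that all elements of $D_1$ are $\J$-related to one another in $\langle E_n\rangle$, and similarly for $D_2$. The key observation is that multiplying white blocks by elements of $E'_n$ can move an element of $D_1$ to any other element of $D_1$: given two white blocks (or inverse blocks) $h[i,j]$ and $h[k,l]$ with $i,k$ both odd, one can pre- and post-multiply $h[i,j]$ by suitable products of length-two blocks from $E'_n$ (which are products of elements of $E'_n$, hence lie in $\langle E_n\rangle$ by definition) so as to adjust the ``top'' index from $i$ to $k$ and the ``bottom'' index from $j$ to $l$, without leaving $\langle E_n\rangle$ and without introducing any $c$ or any parity imbalance. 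Concretely, the $\R$- and $\L$-class descriptions $h[i,j]\,\R\,h[k,l]\Leftrightarrow i=k$ and $h[i,j]\,\L\,h[k,l]\Leftrightarrow j=l$ (inherited appropriately) suggest building the required multipliers as short stairways of white blocks; one checks via $\Sigma$ that e.g.\ $h[i,j]$ times an appropriate white (inverse) block equals $h[i,l]$ or $h[k,j]$. Since moving the top index by $\pm2$ preserves parity, every element of $D_1$ is reachable, so $D_1$ is contained in a single $\J$-class; the same argument handles $D_2$.

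Next I would show $D_1$ and $D_2$ are genuinely \emph{disjoint} $\J$-classes, i.e.\ no element of $D_1$ is $\J$-related in $\langle E_n\rangle$ to an element of $D_2$, which is equivalent to the incomparability claim once we know they cannot be merged. Here is the crux: if $h[i,j]\in D_1$ and $h[k,l]\in D_2$ were comparable, say $h[k,l]\in\langle E_n\rangle^1\, h[i,j]\,\langle E_n\rangle^1$, then $h[k,l] = u\, h[i,j]\, v$ in $K_n$ for some $u,v\in\langle E_n\rangle^1$. Now bring everything to J.n.f.: the left side is already a J.n.f.\ consisting of a single white block (no $c$'s, no colour imbalance). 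On the right side, $u$ and $v$ are products of elements of $E'_n$ (or empty), and I would argue that the resulting product, when reduced via $\Sigma$, must still lie in the $\D$-class $D$ — this is where I expect to invoke the single diagrammatic calculation the authors flagged. In the diagrammatic picture, a white block/inverse block has a specific ``rank'' or defect (exactly one hook through the middle), and $D$ is precisely the collection of such diagrams; multiplying on either side by elements of $\langle E_n\rangle$ that keep us in $D$ can only act by the $\R$- and $\L$-relation moves described above, which \emph{never} change the parity of the top index $i$. Hence $h[k,l]$ must have odd top index, contradicting $h[k,l]\in D_2$.

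I anticipate the main obstacle to be exactly this last step: ruling out that one can ``leave $D$ and come back'' — that is, that a product $u\,h[i,j]\,v$ could pass through lower $\J$-classes (diagrams of larger defect, or with surplus $c$'s) and yet land back inside $D$ with the opposite parity. The clean way around this is the parity/characteristic-number bookkeeping: white blocks have $\chi = 0$ and, within $D\cap\langle E_n\rangle$, the top index parity is a well-defined invariant of the $\R$-class, while the $\R$-classes meeting $\langle E_n\rangle$ split according to that parity. Since every generator in $E'_n$ is white or blue-red or red-blue and acts on the $\R$-class of a white block only by the stairway moves (shifting the top index by an even amount), the parity of $i$ is preserved under all admissible one-sided multiplications that stay in $D$; combined with the fact (from the $K_n$ structure in \cite{LFG}) that $D$ is a single $\D$-class with $\D = \J$ and trivial $\H$, this forces $D_1$ and $D_2$ to be distinct and, having no element of one below an element of the other, incomparable.
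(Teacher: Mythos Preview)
Your argument that each $D_i$ lies in a single $\J$-class of $\langle E_n\rangle$ is fine and matches the paper (which simply invokes relation~(2)). The gap is in the incomparability step. You correctly isolate the difficulty---that a product $u\,h[i,j]\,v$ with $u,v\in\langle E_n\rangle^1$ might ``leave $D$ and come back''---but you do not resolve it; tracking the top-index parity generator by generator only works for steps that remain inside $D$, and you never establish that they do. Your final paragraph asserts the conclusion rather than proving it.

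In fact the fix is simpler than the route you sketch, and needs neither a generator-by-generator analysis nor any diagrams. Suppose $h[k,l]=u\,h[i,j]\,v$ with $h[i,j]\in D_1$, $h[k,l]\in D_2$ and $u,v\in\langle E_n\rangle^1$. Since $h[k,l]$ and $h[i,j]$ lie in the same $\J$-class $D$ of $K_n$, standard Green's theory gives $u\,h[i,j]\in D$ and $u\,h[i,j]\,\L\,h[i,j]$, whence $u\,h[i,j]=h[p,j]$ for some $p$. But $u\,h[i,j]\in\langle E_n\rangle$ because $\langle E_n\rangle$ is closed under products, so Theorem~\ref{main1} forces $p$ and $j$ to have opposite parity; hence $p$ is odd, like $i$. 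Then $h[k,l]=h[p,j]\,v$ and the dual argument gives $h[k,l]\,\R\,h[p,j]$, so $k=p$, contradicting that $k$ is even. This disposes of both distinctness and incomparability in one stroke; the ``leave $D$ and come back'' worry evaporates once you apply Theorem~\ref{main1} to the single element $u\,h[i,j]$ rather than to a sequence of partial products.

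For comparison, the paper takes a different route for this step: it reduces to showing $h[1,2]\ne y\,h[2,1]\,z$ for $y,z\in\langle E_n\rangle$, and then argues diagrammatically that any such $z$ must contain both components $\{2,3\}$ and $\{2',3'\}$, forcing $z=h[2,2]$, a red block excluded from $\langle E_n\rangle$ by Theorem~\ref{main1}.
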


\begin{proof}
It follows from the defining relation (2) that all elements of $D_1$ are $\D$-related (and hence $\J$-related) to each other, and similarly for $D_2$.  To complete the proof of the first statement, by symmetry, it remains to show that any element $x\in\langle E_n\rangle$ that is $\J$-related to $h[1,2]$ must belong to $D_1$.  So suppose $x$ is such an element.  In particular, $x$ is $\J$-related to $h[1,2]$ in $K_n$, so it follows from above-mentioned facts from~\cite{LFG} that $x=h[i,j]$ for some $i,j$.  But, since $x\in\langle E_n\rangle$, it follows from Theorem \ref{main1} that $i,j$ are of opposite parity.  If $i$ is odd, then $x\in D_1$ and we are done, so suppose instead that $i$ is even.  Since then $h[i,j]\,\J\,h[2,1]$, we deduce that $h[1,2]\,\J\,h[2,1]$, and so $h[1,2]=yh[2,1]z$ for some $y,z\in\langle E_n\rangle$.  It is easy to see, diagrammatically, that $z$ must contain both components $\{2,3\}$ and $\{2',3'\}$.  But then, in fact, $z=h[2,2]$ is a red block and hence not an element of $\langle E_n\rangle$, by Theorem~\ref{main1}, a contradiction.  As noted above, this completes the proof of the first statement.  

We have already seen that $h[1,2]\not=yh[2,1]z$ for all $y,z\in\langle E_n\rangle$, from which it follows that $D_1\not\leq D_2$.  By a symmetrical argument, we also obtain $D_2\not\leq D_1$.
\end{proof}

Note that if $n=2m+1$ is odd, then both $D_1$ and $D_2$ have $m$ $\R$-classes and $m$ $\L$-classes.  On the other hand, if $n=2m$ is even, then $D_1$ has $m$ $\R$-classes and $m-1$ $\L$-classes, with $D_2$ having $m-1$ $\R$-classes and $m$ $\L$-classes.
The $\J$-classes $D_1$ and $D_2$ are pictured in Figure \ref{fig:eggbox} (for $n=10$); in the diagram, $\R$-related elements are in the same row, $\L$-related elements in the same column, and idempotents are shaded grey (such diagrams are commonly called \emph{eggbox diagrams}).

\begin{figure}[ht]
\begin{center}
\begin{tikzpicture}[scale=1]
\foreach \x/\y in {0/4,0/3,1/3,1/2,2/2,2/1,3/1,3/0} {\fillbox{\x}{\y}}
\foreach \x in {0,...,4} {\draw (\x,0)--(\x,5);}
\foreach \x in {0,...,5} {\draw (0,\x)--(4,\x);}
\foreach \x in {1,3,5,7,9} \foreach \y in {2,4,6,8} {\draw(\y/2-.5,{4.5-(\x-1)/2})node{{\tiny $h[\x, \y]$}};}
\draw[|-|] (-.5,0)--(-.5,5);
\draw(-.5,2.5)node[left]{$D_1$};
\begin{scope}[shift={(6,.5)}]
\foreach \x/\y in {0/3,1/3,1/2,2/2,2/1,3/1,3/0,4/0} {\fillbox{\x}{\y}}
\foreach \x in {0,...,5} {\draw (\x,0)--(\x,4);}
\foreach \x in {0,...,4} {\draw (0,\x)--(5,\x);}
\foreach \x in {2,4,6,8} \foreach \y in {1,3,5,7,9} {\draw({(\y-1)/2+.5},{4-(\x-1)/2})node{{\tiny $h[\x, \y]$}};}
\draw[|-|] (5.5,0)--(5.5,4);
\draw(5.5,2)node[right]{$D_2$};
\end{scope}
%
%
%
\end{tikzpicture}
    \caption{Eggbox diagrams of the $\J$-classes $D_1$ and $D_2$ in $\langle E_{10}\rangle$.}
    \label{fig:eggbox}
   \end{center}
 \end{figure}
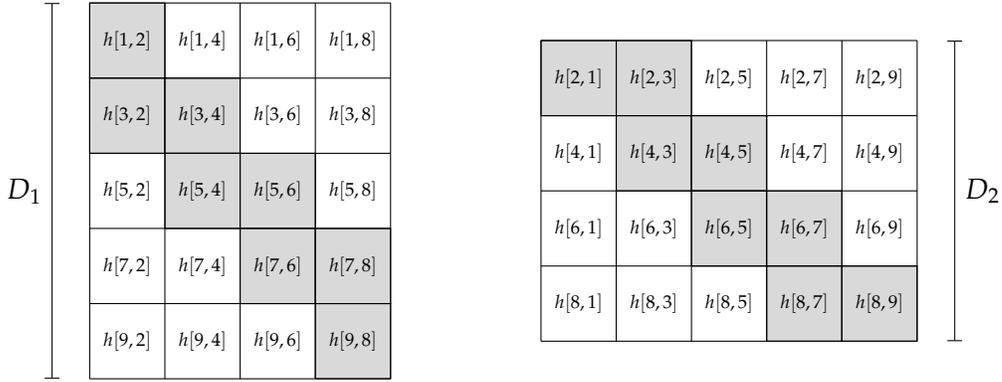


Note that $E_n'\subseteq D_1\cup D_2$.  Since $\langle E_n\rangle=\langle E_n'\rangle$, it follows that $D_1$ and $D_2$ are precisely the maximal $\J$-classes of $\langle E_n\rangle\setminus\{1\}$.  Note also that $E(D_i)$ generates the principal factor $D_i^\star$ (as a semigroup) for each $i$.  (Indeed, if for example ${x\in D_1}$, then $x=e_1\ldots e_k$ for some $e_i\in E_n'$; but if any of the $e_i$ belonged to $D_2$, then we would have $D_1\leq D_2$, contradicting Lemma \ref{lem:J}.)

Since the identity element $1$ cannot be obtained as a (non-vacuous) product of elements of $E_n'$, it follows that the (idempotent) rank of $\langle E_n\rangle$ is equal to the sum of the (idempotent) ranks of the principal factors $D_1^\star$ and $D_2^\star$, where here we consider generation of $D_i^\star$ as semigroups.  

Since each $D_i^\star$ is idempotent generated, \cite[Corollary 8]{Gr} says that $\rank(D_i^\star)$ is equal to the maximum of the number of $\R$- and $\L$-classes contained in $D_i$.  As noted above, this is $m=\lfloor\frac n2\rfloor$, regardless of whether $n=2m$ is even or $n=2m+1$ is odd.  

On the other hand, each $D_i$ contains $n-2$ idempotents, and it turns out that $E(D_i)$ constitutes a unique minimal 
idempotent generating set for the principal factor $D_i^\star$.  
Indeed, by removing an arbitrary element $e$ from $E(D_i)$, one of two things happens (see Figure \ref{fig:eggbox}):
\begin{itemize}
\item[(i)] $E(D_i)\setminus\{e\}$ has empty intersection with an $\R$- or $\L$-class of $D_i$ (for example, if $e=h[1,2]$), or
\item[(ii)] $E(D_i)\setminus\{e\}$ splits into two subsets $X_i,Y_i$ such that no idempotent from $X_i$ is $\L$- or 
$\R$-related to any idempotent from $Y_i$. 
\end{itemize}
In either case, it follows that $\langle E(D_i)\setminus\{e\}\rangle$ does not contain $e$.  Indeed, this follows from \cite[Exercise 12, p98]{Howie} in case (i), or from the proof of \cite[Theorem 1]{Ho} in case (ii).
%
%
Putting all this together, we have proved the following result.

\begin{thm}\label{main2}
Let $n\geq 3$. Then $\mathrm{rank}(\langle E_n\rangle)=2\lfloor\frac{n}{2}\rfloor$ and 
$\mathrm{idrank}(\langle E_n\rangle)=2n-4$. \hfill$\Box$
\end{thm}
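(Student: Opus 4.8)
The plan is to reduce the computation of $\rank(\langle E_n\rangle)$ and $\idrank(\langle E_n\rangle)$ to the corresponding invariants of the two principal factors $D_1^\star$ and $D_2^\star$, and then to invoke standard facts about (idempotent) ranks of completely $0$-simple semigroups. The excerpt has already done most of the structural groundwork: by Theorem~\ref{main1} we know $\langle E_n\rangle = \langle E_n'\rangle$, and by Lemma~\ref{lem:J} the sets $D_1$ and $D_2$ are incomparable $\J$-classes, which (together with $E_n'\subseteq D_1\cup D_2$ and the remark that each $D_i^\star$ is generated by its idempotents) shows that $D_1$ and $D_2$ are exactly the maximal $\J$-classes of $\langle E_n\rangle\setminus\{1\}$. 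So the first step I would carry out explicitly is to argue that, since the identity $1$ is not a nonempty product of elements of $E_n'$, any generating set (resp. idempotent generating set) $X$ for $\langle E_n\rangle$ must, after discarding $1$ if present, restrict to generating sets for the principal factors of the two maximal $\J$-classes; conversely a union of generating sets for $D_1^\star$ and $D_2^\star$ generates $\langle E_n\rangle$. This gives $\rank(\langle E_n\rangle) = \rank(D_1^\star)+\rank(D_2^\star)$ and likewise for $\idrank$, where generation of $D_i^\star$ is taken in the category of semigroups.

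Next I would compute $\rank(D_i^\star)$. Since $\H$ is trivial on $K_n$ (hence on $\langle E_n\rangle$), each $D_i^\star$ is a completely $0$-simple semigroup with trivial group $\mathscr H$-classes whose sandwich matrix, because $D_i^\star$ is idempotent generated, has a certain connectivity property; by Gray's result \cite[Corollary 8]{Gr} the rank of such a semigroup equals the larger of the number of $\R$-classes and the number of $\L$-classes in $D_i$. From the $\R$/$\L$ description $h[i,j]\,\R\,h[k,l]\Leftrightarrow i=k$ and $h[i,j]\,\L\,h[k,l]\Leftrightarrow j=l$, combined with the parity constraint defining $D_i\cap\langle E_n\rangle$, one counts: when $n=2m+1$ each of $D_1,D_2$ has $m$ rows and $m$ columns; when $n=2m$, $D_1$ has $m$ rows and $m-1$ columns while $D_2$ has $m-1$ rows and $m$ columns. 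In every case the relevant maximum is $m=\lfloor n/2\rfloor$, so $\rank(D_i^\star)=\lfloor n/2\rfloor$ for $i=1,2$ and $\rank(\langle E_n\rangle)=2\lfloor n/2\rfloor$.

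For the idempotent rank, I would first observe that each $D_i$ contains exactly $n-2$ idempotents: these are the idempotent white (inverse) blocks, and a quick count (either directly or from the shaded cells in Figure~\ref{fig:eggbox}) gives $n-2$ in each of $D_1$ and $D_2$. Since $E(D_i)$ is an idempotent generating set for $D_i^\star$, we get $\idrank(D_i^\star)\le n-2$. The substantive point is the reverse inequality: $E(D_i)$ is a \emph{minimal} idempotent generating set, i.e. no proper subset generates. Here I would argue that deleting any idempotent $e$ from $E(D_i)$ either leaves some $\R$- or $\L$-class of $D_i$ with no idempotent at all (the extreme "corner" cases, e.g. $e=h[1,2]$), in which case $\langle E(D_i)\setminus\{e\}\rangle$ misses that class and in particular misses $e$, or it disconnects the bipartite "idempotent-incidence" graph on $\R$- and $\L$-classes into two pieces $X_i,Y_i$ with no $\R$- or $\L$-relation between them, and then by the argument underlying \cite[Theorem 1]{Ho} (or \cite[Exercise 12, p.98]{Howie} in the first case) the element $e$ again cannot be recovered. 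Hence $\idrank(D_i^\star)=n-2$ and $\idrank(\langle E_n\rangle)=2(n-2)=2n-4$, completing the proof.

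The main obstacle is the minimality of $E(D_i)$ as an idempotent generating set — that is, the reverse inequality $\idrank(D_i^\star)\ge n-2$. The forward estimates and the rank computation are essentially bookkeeping once Gray's theorem is in hand, but to pin down the idempotent rank one genuinely needs to understand the combinatorics of the eggbox diagram of $D_i$: one must verify that the bipartite incidence pattern of idempotents across $\R$- and $\L$-classes is, in graph-theoretic terms, a "path"-like structure so that removing any single idempotent either kills a class or disconnects the diagram, and then translate that disconnection into a proof that the deleted idempotent lies outside the subsemigroup generated by the rest. This is where the cited results of Howie and of \cite{Ho} do the real work, and where one should be careful that the hypotheses of those results (trivial $\mathscr H$-classes, the precise shape of the incidence graph, the fact that products staying inside $D_i$ are controlled by $\R$/$\L$ incidences) are actually met here.
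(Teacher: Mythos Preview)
Your proposal is correct and follows essentially the same route as the paper: reduce to the two principal factors $D_1^\star,D_2^\star$ via the maximality and incomparability of $D_1,D_2$ (Lemma~\ref{lem:J}), apply \cite[Corollary~8]{Gr} to get $\rank(D_i^\star)=\lfloor n/2\rfloor$, and establish $\idrank(D_i^\star)=n-2$ by showing $E(D_i)$ is a minimal idempotent generating set via exactly the two cases (missing $\R/\L$-class or disconnection) you describe, with the same citations to \cite{Howie} and \cite{Ho}. Your identification of the minimality of $E(D_i)$ as the crux, and of the path-like incidence structure of the eggbox as the reason it holds, matches the paper precisely.
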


\begin{rmk}
The previous result concerns \emph{monoid} generating sets; for the (idempotent) rank in the context of \emph{semigroup} generating sets, $1$ must be added to the above expressions.  Note also that $\rank(\langle E_n\rangle)=\mathrm{idrank}(\langle E_n\rangle)=0$ if $n\leq2$.  By consulting Theorem \ref{main2}, the only other values of $n$ for which $\mathrm{rank}(\langle E_n\rangle)=\mathrm{idrank}(\langle E_n\rangle)$ holds are $n=3,4$.
\end{rmk}

\section*{Acknowledgements}

The first named author gratefully acknowledges the support of Grant No.\ 174019 of the Ministry of Education, Science, and Technological Development of the Republic of Serbia.

\footnotesize
\def\bibspacing{-1.1pt}
\bibliography{GMJ-16-0034}
\bibliographystyle{plain}
\end{document}